\documentclass[11pt]{amsart}

\usepackage[T1]{fontenc}
\usepackage[utf8]{inputenc}
\usepackage{lmodern}
\usepackage{amsmath,amssymb,amsthm,mathtools}
\usepackage{microtype}
\usepackage{needspace}
\usepackage{tikz}
\usepackage[hidelinks]{hyperref}

\newtheorem{theorem}{Theorem}
\newtheorem{lemma}[theorem]{Lemma}
\newtheorem{corollary}[theorem]{Corollary}

\title[Meneguette's Polynomial Problem]
{A Solution to Meneguette's Polynomial Problem}

\author{K. Castillo}

\address{CMUC, University of Coimbra, 3000-143 \mbox{Coimbra}, Portugal}

\email{math@keniercastillo.com}

\date{26 July 2026}

\subjclass[2020]{30C15, 26C10, 65L20}

\keywords{Polynomial zeros, coefficient perturbation, unit disc,
Brown methods, zero stability}

\hypersetup{
  pdftitle={A Solution to Meneguette's Polynomial Problem},
  pdfauthor={K. Castillo},
  pdfsubject={Polynomial zero location and stability of Brown methods},
  pdfkeywords={polynomial zeros, coefficient perturbation, unit disc,
    Brown methods, zero stability}
}

\begin{document}

\begin{abstract}
In 1979, Jeltsch [Numer. Math. 32 (1979), 167--181]
conjectured that every zero-stable Brown method is stiffly stable. For
Brown $(K,L)$ methods, Meneguette subsequently reduced the relevant
$A_0$-stability question to a zero-location property for a family of
perturbed characteristic polynomials. A closely related perturbation
question already appeared in his 1987 Oxford doctoral thesis, and he
later posed a broader purely polynomial version of the problem in 1994
[SIAM Review 36 (1994), 656--657]. We provide a complete solution to Meneguette’s polynomial problem.
\end{abstract}

\maketitle

\section{Introduction and main result}

The zero-location problem studied here arose from the stability
analysis of Brown's implicit multistep multi-derivative methods for
stiff ordinary differential equations~\cite{Brown77}. In 1979, Jeltsch
conjectured that every zero-stable Brown method is stiffly
stable~\cite{J79}. Meneguette subsequently showed that the relevant
$A_0$-stability question for Brown $(K,L)$ methods could be reduced
to determining the location of the zeros of a one-parameter family of
perturbed characteristic polynomials~\cite[Theorem~2.13, Remark~2.16 and
pp.~105--107]{M87}. He also obtained the coefficient monotonicity and
strong-stability results used in the Brown-method application
below~\cite[Theorems~2.9 and~2.12, pp.~66--72]{M87}.

He later isolated the underlying zero-location question as a problem
about arbitrary polynomials. It appeared as Problem~94-19 in
1994~\cite{M94}. Let
$$
P(z)=a_0+a_1z+\cdots+a_{n-1}z^{n-1}+a_nz^n,
$$
where
$$
0<a_0\leq a_1\leq\cdots\leq a_{n-1},
\quad
a_n>0.
$$
Suppose that all the zeros of $P$ lie in the unit disc and that
$$
na_n>(n-1)a_{n-1}.
$$
Must all the zeros of
$$
P_d(z):=P(z)+dz^n
$$
also lie in the unit disc for every $d>0$?

The original formulation leaves open whether the unit disc is to be
understood in the open or the closed sense. Theorem~\ref{thm:main}, stated below,
resolves this ambiguity: the original polynomial may have zeros on the
unit circle, whereas every positive perturbation has all its zeros in
the open unit disc.

For $n>1$, Blondel recorded a partial answer in
1995~\cite{Blondel95}, observing from the Enestr\"om--Kakeya theorem
that the conclusion holds whenever
$$
d\geq\frac{a_n}{n-1}.
$$
The hypothesis of Problem~94-19 gives
$$
a_{n-1}-a_n<\frac{a_n}{n-1}.
$$
A direct application of the same theorem therefore gives the sharper
closed-disc conclusion whenever $d\geq a_{n-1}-a_n$, because the
coefficients of $P_d$ are then non-decreasing. If
$d>a_{n-1}-a_n$, the final coefficient inequality is strict; the
equality characterisation in the standard proof of the
Enestr\"om--Kakeya theorem would then force a unit-circle zero $w$ to
satisfy $w^n=w^{n+1}=1$. Hence $w=1$, which is impossible because
$P_d(1)>0$, so all the zeros lie in $\mathbb D$. At the endpoint
$d=a_{n-1}-a_n$, however, this elementary argument yields only
$\overline{\mathbb D}$. Consequently, if $a_n\geq a_{n-1}$, the
open-disc conclusion follows immediately for every $d>0$. If
$a_n<a_{n-1}$, what remains beyond the elementary argument is
$$
\frac{n-1}{n}a_{n-1}<a_n<a_{n-1},
\quad
0<d\leq a_{n-1}-a_n
$$
for the open-disc conclusion (and the same range with a strict upper
inequality for the closed-disc interpretation of the original
problem). Thus the substance of Problem~94-19 lies in the
small-perturbation regime, including the endpoint when the strict
conclusion is sought.

The problem was subsequently investigated under additional structural
assumptions or coefficient conditions. In 2008, Meneguette, Botta and
Cuminato treated the reflexive case~\cite[Proposition~3.1]{MBC08}.
Under the coefficient monotonicity in Problem~94-19, reflexivity forces
the polynomial, for $n>1$, to have the form
$$
P(z)
=
a_n(1+z^n)
+
a_{n-1}(z+z^2+\cdots+z^{n-1}).
$$
In the non-trivial case $a_n<a_{n-1}$, they obtained the sufficient
perturbation condition
$$
d\geq a_{n-1}-2a_n.
$$
Since $na_n>(n-1)a_{n-1}$ implies $2a_n>a_{n-1}$ when $n>1$,
this establishes the closed-disc conclusion for every $d>0$; the
boundary case $a_n=a_{n-1}$ already follows directly from the
Enestr\"om--Kakeya theorem. Thus the prior work settles the reflexive
case in the closed-disc sense. Theorem~\ref{thm:main} below also gives
the strict open-disc conclusion.

Botta's doctoral thesis proves the same closed-disc reflexive result,
together with the following additional sufficient
case~\cite[Propositions~2.2 and~2.3]{Botta08}: for some integer $p>1$,
$$
\begin{aligned}
0<a_0<a_1<\cdots<a_{n-1},
\quad&
0<a_n<a_{n-1},
\\[7pt]
p a_j<(p-1)a_{j+1},
\quad&
j=0,\ldots,n-2,
\quad
p a_n>(p-1)a_{n-1}.
\end{aligned}
$$
Botta and Meneguette subsequently pursued both analytical and numerical
investigations of leading-coefficient perturbations, including a search
for counterexamples~\cite{BM10}.

In 2012, Botta et al.\ formulated Meneguette's conjecture with the
additional hypothesis $a_n<a_{n-1}$, and proved it under stronger
coefficient-ratio conditions~\cite{BMCM12}. More precisely, they
required the existence of $r\in(0,1)$ such that
$$
\begin{aligned}
0<a_0<a_1<\cdots<a_{n-1},
\quad&
0<a_n<a_{n-1},
\\[7pt]
a_j<r a_{j+1},
\quad&
j=0,\ldots,n-2,
\quad
r a_{n-1}<a_n.
\end{aligned}
$$
Under these hypotheses, they proved that, for every $\gamma>0$, all
the zeros of $P(z)+\gamma z^n$ lie in the open unit disc. They also
applied this result to a subclass of Brown methods.

After division by $a_n$, the reflexive polynomial considered
in~\cite{MBC08} has the normalised palindromic form
$$
R_\lambda(z)
=
1+\lambda(z+z^2+\cdots+z^{n-1})+z^n,
\quad
\lambda=\frac{a_{n-1}}{a_n}.
$$
For $n>1$, Botta, Marques and Meneguette completely characterised in
2014 the values of $\lambda$ for which the zeros of $R_\lambda$ lie on
the unit circle~\cite[Theorem~3.1]{BMM14}. They also proved that, for
$\lambda\geq0$ and $\gamma>0$, all the zeros of
$R_\lambda(z)+\gamma z^n$ lie in the closed unit disc precisely when
$\gamma\geq\lambda-2$~\cite[Theorem~3.3]{BMM14}. This sharp criterion
extends and refines the reflexive analysis of~\cite{MBC08}. In the
parameter range of Problem~94-19 it applies to every positive
perturbation.

None of the preceding results addresses the general
small-perturbation regime under the hypotheses of Problem~94-19 alone.
Theorem~\ref{thm:main} below resolves this case without imposing an auxiliary
ratio condition, reflexivity or palindromicity, or a positive lower
bound on the perturbation. It permits zeros of the unperturbed
polynomial on the unit circle, yet places every zero of each positively
perturbed polynomial in the open unit disc.

The proof uses only the theory of polynomial zeros and is independent
of the stability-theoretic machinery from which the problem arose. In
the final section, the theorem is combined with Meneguette's coefficient
and strong-stability results; the resulting range is then compared with
the earlier coefficient-ratio argument of~\cite{BMCM12}.

Throughout, we write
$$
\mathbb D:=\{z\in\mathbb C:|z|<1\},
\quad
\overline{\mathbb D}:=\{z\in\mathbb C:|z|\leq1\}.
$$

\begin{theorem}\label{thm:main}
Let $n$ be a positive integer and let
$$
P(z)=\sum_{j=0}^n a_jz^j,
$$
where $a_0,\ldots,a_n\in\mathbb R$ satisfy
$$
a_j>0, \quad 0\leq j\leq n,
\quad
a_0\leq a_1\leq\cdots\leq a_{n-1}.
$$
Suppose that every zero of $P$ lies in
$\overline{\mathbb D}$ and that
$$
na_n>(n-1)a_{n-1}.
$$
Then, for every $d>0$, all the zeros of
$P_d(z):=P(z)+dz^n$ lie in $\mathbb D$.
\end{theorem}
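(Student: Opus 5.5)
The plan is a continuity (homotopy) argument in the perturbation parameter, reduced to a statement about the unit circle. For $t\ge 0$ put $P_t(z)=P(z)+tz^n$. The leading coefficient $a_n+t$ stays positive, so the $n$ zeros of $P_t$ depend continuously on $t$ and none escapes to infinity. As $t\to\infty$, $P_t/(a_n+t)\to z^n$ locally uniformly, so for large $t$ all zeros lie in $\mathbb D$. By hypothesis, at $t=0$ no zero lies in $\{|z|>1\}$. Hence it suffices to prove the following claim: \emph{for $t>0$, whenever a zero of $P_t$ meets the unit circle, it moves strictly into $\mathbb D$ as $t$ increases, and no zero ever crosses outward.} This claim gives two things. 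First, the number of zeros in $\{|z|>1\}$ is zero throughout $[0,\infty)$. Second, since a zero on the circle at some $t_0>0$ would have to have arrived from $\{|z|>1\}$ or be strictly leaving $\mathbb D$ (it enters $\mathbb D$ strictly afterwards), we get contradictions in both cases. Zeros of $P$ on the circle at $t=0$ are handled by the same inward-motion statement applied at $t=0$.

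To analyse a circle zero, take $|w|=1$, $w=e^{i\theta}$, with $P_t(w)=0$. I would first check that $w\neq 1$, which holds because $P_t(1)>0$. Dividing by $w^n$ gives
$$a_n+t=-\sum_{j=0}^{n-1}a_j w^{j-n},$$
so $\sum_{j<n}a_j\sin((n-j)\theta)=0$ and $a_n+t=-\sum_{j<n}a_j\cos((n-j)\theta)$.

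If $w$ is a simple zero, implicit differentiation gives $w'(t)=-w^n/P_t'(w)$. Inward motion, $\operatorname{Re}(\bar w\,w'(t))<0$, is then equivalent to $\operatorname{Re}\bigl(wP_t'(w)/w^n\bigr)>0$. Eliminating $a_n+t$ with the relation above, this reduces to
$$
\sum_{j=0}^{n-1}(n-j)\,a_j\cos((n-j)\theta)<0
\quad\text{whenever}\quad
\sum_{j<n}a_j\sin((n-j)\theta)=0
\ \text{and}\
-\sum_{j<n}a_j\cos((n-j)\theta)>a_n .
$$
The last condition uses $t>0$; for $t=0$ it becomes $\ge a_n$.

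This trigonometric inequality is the heart of the matter and the step I expect to be the main obstacle. Note that $(n-j)\ge 1$ weights the cosine terms, and the available information is the strict bound $a_n>\frac{n-1}{n}a_{n-1}$ together with monotonicity of the $a_j$. My first attempt would be an Abel summation in $j$, writing $a_j=\sum_{i\le j}(a_i-a_{i-1})$ with $a_{-1}=0$. This expresses both $\sum a_j\cos((n-j)\theta)$ and $\sum (n-j)a_j\cos((n-j)\theta)$ as nonnegative combinations of Dirichlet-type kernels $\sum_{k=1}^{m}\cos k\theta$ and $\sum_{k=1}^m k\cos k\theta$, with closed forms in $\sin$ and $\cos$ of $\theta/2$. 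I would then bound the weighted sum by $n$ times the unweighted one plus a correction controlled by $a_{n-1}$. The goal is an inequality of the shape $\operatorname{Re}(wP_t'(w)/w^n)\ge n(a_n+t)-(n-1)a_{n-1}>0$; this is exactly where the hypothesis $na_n>(n-1)a_{n-1}$ should enter. If the direct estimate fails, I would instead use the hypothesis that $P$ has all its zeros in $\overline{\mathbb D}$ through the Eneström–Kakeya-type factorisation $(1-z)P(z)$, to constrain the possible angles $\theta$ at which real values of $P(w)w^{-n}$ occur.

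Finally, I would treat the degenerate case of a multiple zero on the circle. Here I would use a Puiseux expansion, or perturb $a_n$ slightly within the open condition $na_n>(n-1)a_{n-1}$ and pass to the limit. The rest of the argument is routine counting.
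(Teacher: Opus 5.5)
Your reduction is correct and matches the paper's. With $b_k:=a_{n-k}$ (the paper works with the reciprocal polynomial $Q_t$), your target $\operatorname{Re}\bigl(wP_t'(w)/w^n\bigr)\geq n(a_n+t)-(n-1)a_{n-1}$ is exactly the paper's estimate $S\leq(n-1)b_1-nt$, and your continuity argument is the same in outline.

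The gap is that this estimate, which you rightly call the heart of the matter, is never proved, and the route you sketch cannot succeed as described. After your Abel summation, with $\delta_k:=b_k-b_{k+1}\geq0$ and $b_{n+1}:=0$, the bound ``weighted sum $\leq n\cdot$(unweighted sum) $+(n-1)a_{n-1}$'' reads
\[
\sum_{k=1}^n\delta_k\Bigl[(n-1)+\sum_{j=1}^k(n-j)\cos(j\theta)\Bigr]\geq0 .
\]
The individual kernels here are not nonnegative. For $k=2$ the bracket equals $1+(n-1)c+2(n-2)c^2$ with $c=\cos\theta$, which is negative at $c=-(n-1)/(4(n-2))$ as soon as $n\geq8$. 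Taking $b_1=b_2=1$ and the remaining $b_j$ zero (or arbitrarily small) shows that the inequality is false for general $\theta$. It holds only because $\theta$ is constrained by $\sum_j b_j\sin(j\theta)=0$, and your sketch never says how that constraint enters.

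The missing idea, which is the content of the paper's trigonometric lemma, is a multiplier. Add $(n-1)\cot(\theta/2)$ times the Abel-summed constraint $\sum_k\delta_k\sum_{j=1}^k\sin(j\theta)=0$. Write $x=\theta/2$ and $U_k(x)=\sin((k+1)x)/\sin x$. The identity $\cos(2jx)+\cot x\,\sin(2jx)=\sin((2j+1)x)/\sin x$ then turns each new kernel into $(n-k)U_k(x)^2+D_k(x)$, where $2D_k=kU_{k-1}^2+(k-1)U_k^2-1$. The identity $U_k^2+U_{k-1}^2-2\cos x\,U_kU_{k-1}=1$ gives $D_k\geq0$.

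Two further remarks. First, this estimate uses only the monotonicity of the coefficients. Your Enestr\"om--Kakeya fallback, which exploits the location of the zeros of $P$, is therefore unnecessary; that hypothesis is needed only for the configuration at $t=0$. Second, once the estimate holds it gives $\operatorname{Re}\bigl(wP_t'(w)/w^n\bigr)>0$ at every unit-circle zero. Such zeros are therefore automatically simple, and no Puiseux or perturbation argument for multiple zeros is required.
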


\section{Proof of the main result}

\begin{proof}
If $n=1$, the hypothesis
$-a_0/a_1\in\overline{\mathbb D}$ gives $a_0\leq a_1$. The zero of
$P_d$ is $-a_0/(a_1+d)$, whose modulus is strictly less than
$a_0/a_1\leq1$, and hence lies in $\mathbb D$. We may therefore
suppose that $n\geq2$.

\subsubsection*{Reciprocal formulation}
For $t\geq a_n$, consider the one-parameter family
$$
P_t(z):=a_0+a_1z+\cdots+a_{n-1}z^{n-1}+tz^n.
$$
Thus $P_{a_n}=P$ and $P_{a_n+d}=P_d$.

It is convenient to introduce the reciprocal polynomial
$$
Q_t(w):=t+a_{n-1}w+a_{n-2}w^2+\cdots+a_0w^n,
\quad
w\in\mathbb C.
$$
For $w\neq0$, it satisfies
$$
Q_t(w)=w^nP_t\!\left(\frac{1}{w}\right).
$$
Since $a_0>0$, the polynomial $Q_t$ has degree $n$.
For $1\leq j\leq n$, set $b_j:=a_{n-j}$. Then
\begin{equation}\label{eq:Qt}
Q_t(w)=t+\sum_{j=1}^n b_jw^j.
\end{equation}
The monotonicity of the coefficients of $P$ yields
$$
b_1\geq b_2\geq\cdots\geq b_n>0.
$$
In particular, $b_1=a_{n-1}$.

Since $a_0>0$ and $t\geq a_n>0$, neither $P_t$ nor $Q_t$
vanishes at the origin. The reciprocal relation gives a
multiplicity-preserving correspondence between their zeros, under
which $w$ corresponds to $1/w$. It follows that all the zeros of
$P_t$ lie in $\mathbb D$ precisely when every zero of $Q_t$ has
modulus greater than $1$. Likewise, all the zeros of $P_t$ lie in
$\overline{\mathbb D}$ precisely when every zero of $Q_t$ has
modulus at least $1$.

We shall use the following auxiliary lemma, whose proof is deferred to
Section~\ref{sec:trig-lemma}.

\begin{lemma}\label{lem:trig}
Let $n$ be a positive integer, and let
$$
b_1\geq b_2\geq\cdots\geq b_n\geq0,
$$
and let $\theta\in(0,2\pi)$. If
\begin{equation}\label{eq:imagcondition}
\sum_{j=1}^n b_j\sin(j\theta)=0,
\end{equation}
then
\begin{equation}\label{eq:lemmaineq}
(n-1)b_1+
\sum_{j=1}^n(n-j)b_j\cos(j\theta)
\geq0.
\end{equation}
\end{lemma}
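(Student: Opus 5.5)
The plan is to reduce the lemma to a family of explicit trigonometric inequalities, one for each ``step'' sequence, by Abel summation and a Lagrange-type multiplier for the constraint \eqref{eq:imagcondition}. Set $c_k:=b_k-b_{k+1}\geq0$ for $1\leq k\leq n$, with $b_{n+1}:=0$, so that $b_j=\sum_{k\geq j}c_k$. Both \eqref{eq:imagcondition} and the left-hand side of \eqref{eq:lemmaineq} are linear in $b$, hence linear in $c=(c_1,\dots,c_n)$ with $c\geq0$. Write
\[
f_k(\theta):=(n-1)+\sum_{j=1}^k(n-j)\cos(j\theta),\qquad g_k(\theta):=\sum_{j=1}^k\sin(j\theta).
\]
The left side of \eqref{eq:lemmaineq} is $\sum_k c_kf_k(\theta)$, and the constraint reads $\sum_k c_kg_k(\theta)=0$. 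Since $b_1=\sum_k c_k$, the term $(n-1)b_1$ contributes $n-1$ to each $f_k$. The lemma then follows if for each fixed $\theta\in(0,2\pi)$ we find a real $\mu=\mu(\theta)$ with
\[
f_k(\theta)+\mu\, g_k(\theta)\geq0,\qquad 1\leq k\leq n .
\]
Indeed, adding $\mu$ times the vanishing constraint and using $c_k\geq0$ gives \eqref{eq:lemmaineq}. By Farkas' lemma this reduction loses nothing: the lemma is equivalent to the existence of such a $\mu$. So this is the right target.

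First, I would record the closed forms. We have $g_k(\theta)=\sin(k\theta/2)\sin((k+1)\theta/2)/\sin(\theta/2)$. The sum $\sum_{j\leq k}(n-j)\cos(j\theta)$ I would write as $\sum_{j\leq k}(k-j)\cos(j\theta)+(n-k)\sum_{j\leq k}\cos(j\theta)$, that is, a Fejér-type piece plus a multiple of a Dirichlet piece. For $k=n$ the Fejér identity
\[
n+2\sum_{j=1}^{n-1}(n-j)\cos(j\theta)=\frac{\sin^2(n\theta/2)}{\sin^2(\theta/2)}
\]
already gives $f_n=\tfrac{n-2}{2}+\tfrac12\sin^2(n\theta/2)/\sin^2(\theta/2)\geq0$. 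The difficulty is with intermediate $k$, where the Dirichlet piece $(n-k)\bigl(\sin((k+\tfrac12)\theta)/(2\sin(\theta/2))-\tfrac12\bigr)$ can be very negative near $\theta=\pi$ or for suitable phases. This is exactly where the multiplier is needed, since the conjugate Dirichlet sum $g_k$ has the complementary phase.

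Second, I would choose $\mu$ to rotate the Dirichlet piece. The natural choice is $\mu=\cot(\theta/2)$ up to a sign, so that $\sum_{j\leq k}\bigl(\cos(j\theta)+\mu\sin(j\theta)\bigr)$ collapses. Indeed $\cos x+\cot(\theta/2)\sin x=\cos(x-\theta/2)/\sin(\theta/2)$ up to normalisation, and the sum telescopes to an expression bounded below by a constant times $-1/\sin(\theta/2)$ only through an endpoint term. The constant $n-1$ (more precisely, $(n-1)b_1$) and the nonnegative Fejér part should then absorb that endpoint term. Concretely, I would compute $f_k+\mu g_k$ as $\operatorname{Re}\bigl((1-i\mu)\sum_{j\leq k}(n-j)e^{ij\theta}\bigr)+(n-1)+\mu\cdot(\text{correction})$, and then sum the arithmetic-geometric series in closed form. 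If a single $\mu$ does not work uniformly, I would allow $\mu$ to depend on $\theta$ in two regimes, $\theta$ near $0$ or $2\pi$ versus $\theta$ bounded away from them. By the symmetry $\theta\mapsto2\pi-\theta$, which flips the sign of $g_k$, it suffices to treat $\theta\in(0,\pi]$.

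The main obstacle is this middle step: verifying $f_k+\mu g_k\geq0$ simultaneously for all $k$ with one $\mu$. The coefficients $(n-j)$ are not of Fejér type for a truncated range, so the cancellation has to come from the extra $n-1$ together with the multiplier. If the closed-form approach gets messy, my fallback is to argue via the mean: regard $\sum_k c_k(f_k+\mu g_k)$ as $\operatorname{Re}\bigl((1-i\mu)\sum_j(n-j)b_je^{ij\theta}\bigr)+(n-1)b_1$. I would then apply Abel summation a second time in $j$ to the decreasing sequence $(n-j)b_j$, reducing to nonnegativity of partial conjugate-Fejér sums $\sum_{j\leq k}\bigl(\cos(j\theta)+\cot(\theta/2)\sin(j\theta)\bigr)=\dots\geq-1$. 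That bound would be paid for by $(n-1)b_1\geq\sum_{j=1}^{n-1}(b_j-b_{j+1})(n-j)$-type estimates.
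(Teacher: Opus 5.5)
\textbf{There is a genuine gap.} Your reduction is exactly the paper's framework: Abel summation in $c_k=b_k-b_{k+1}$, then adding one multiple of the constraint so that every kernel $f_k+\mu g_k$ is non-negative. Your Farkas remark correctly shows this loses nothing. However, the whole content of the lemma lies in the step you call the main obstacle. You neither carry it out nor choose a multiplier that works.

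\textbf{Your multiplier fails.} Take $k=2$ and $\cos\theta=-\tfrac14$. Using $\cot(\theta/2)\sin\theta=1+\cos\theta$, one gets $f_2\pm\cot(\theta/2)\,g_2=\tfrac{8-n}{8}\pm\tfrac38$, which is negative for $n\geq12$. So the multiplier must grow with $n$. Your split $n-j=(k-j)+(n-k)$ points to $(n-k)\cot(\theta/2)$, but that depends on $k$, and one $\mu$ must serve all $k$.

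\textbf{What works.} The paper splits $n-j=(n-1)-(j-1)$ and takes $\mu=(n-1)\cot(\theta/2)$. With $x=\theta/2$ and $U_k=\sin((k+1)x)/\sin x$, the identity $1+\sum_{j\le k}[\cos(2jx)+\cot x\,\sin(2jx)]=U_k^2$ gives
\[
f_k+(n-1)\cot x\,g_k=(n-k)U_k^2+D_k,
\qquad
D_k:=(k-1)U_k^2-\sum_{j=1}^k(j-1)\cos(2jx).
\]
One must still prove $D_k\geq0$. The paper expands $U_r^2=(r+1)+2\sum_{j=1}^r(r+1-j)\cos(2jx)$ to get $2D_k=kU_{k-1}^2+(k-1)U_k^2-1$. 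It then uses $U_k^2+U_{k-1}^2-2\cos x\,U_kU_{k-1}=1$ to obtain $2D_k\geq(k-3)U_k^2+(k-2)U_{k-1}^2\geq0$ for $k\geq3$. The cases $D_1=0$ and $D_2=8\cos^4x$ are checked directly. Your proposal contains nothing that supplies this inequality.

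\textbf{Your fallback does not close the gap.} The sum $\sum_kc_k(f_k+\mu g_k)$ equals $(n-1)b_1+\sum_j(n-j)b_j\cos(j\theta)+\mu\sum_jb_j\sin(j\theta)$. The sine terms carry weight $b_j$, not $(n-j)b_j$, so this is not $\operatorname{Re}\bigl((1-i\mu)\sum_j(n-j)b_je^{ij\theta}\bigr)+(n-1)b_1$.

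Your second Abel summation is valid as far as it goes. The partial sums of $\cos(j\theta)+\cot(\theta/2)\sin(j\theta)$ equal $U_k^2-1\geq-1$, and $(n-j)b_j$ is non-increasing with first term $(n-1)b_1$. Hence $(n-1)b_1+\sum_j(n-j)b_j\bigl[\cos(j\theta)+\cot(\theta/2)\sin(j\theta)\bigr]\geq0$.

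But under the actual hypothesis $\sum_jb_j\sin(j\theta)=0$, this only shows that the left-hand side of the target inequality is at least $\cot(\theta/2)\sum_j jb_j\sin(j\theta)$. That quantity has no definite sign. The missing ingredient is a way to absorb the term $-\sum_j(j-1)b_j\cos(j\theta)$ left over by the split $n-j=(n-1)-(j-1)$. That is exactly what $D_k\geq0$ provides.
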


\subsubsection*{Zeros on the unit circle}
Fix $t\geq a_n$, and suppose that $Q_t$ has a zero $w$ on the
unit circle. Since $Q_t(1)>0$, this zero is not $1$; we may
therefore write $w=e^{i\theta}$, where $0<\theta<2\pi$. Equating
real and imaginary parts in
$$
t+\sum_{j=1}^n b_je^{ij\theta}=0,
$$
we obtain
\begin{equation}\label{eq:circleimag}
\sum_{j=1}^n b_j\sin(j\theta)=0,
\end{equation}
and
\begin{equation}\label{eq:circlereal}
t+\sum_{j=1}^n b_j\cos(j\theta)=0.
\end{equation}

Set
$$
S:=\sum_{j=1}^n j b_j\cos(j\theta).
$$
Applying Lemma~\ref{lem:trig} to~\eqref{eq:circleimag} gives
$$
(n-1)b_1+
\sum_{j=1}^n(n-j)b_j\cos(j\theta)
\geq0.
$$
On the other hand, equation~\eqref{eq:circlereal} gives
$$
\sum_{j=1}^n(n-j)b_j\cos(j\theta)=-nt-S.
$$
It follows that
$$
S\leq(n-1)b_1-nt.
$$
Since $b_1=a_{n-1}$ and $t\geq a_n$, the hypothesis of the theorem
yields
$$
nt\geq na_n>(n-1)a_{n-1}=(n-1)b_1.
$$
Thus $S<0$.

Since
$$
wQ_t'(w)=\sum_{j=1}^n j b_je^{ij\theta},
$$
we have
$$
\operatorname{Re}\bigl(wQ_t'(w)\bigr)
=
\sum_{j=1}^n j b_j\cos(j\theta)
=
S<0.
$$
In particular, $Q_t'(w)\neq0$, and every zero of $Q_t$ on the unit
circle is simple.

\subsubsection*{Transversal crossings}
Let $t_0\geq a_n$, and suppose that
$$
Q_{t_0}(w_0)=0,
\quad
|w_0|=1.
$$
Although the family was introduced for $t\geq a_n$, the same
expression as in~\eqref{eq:Qt} defines $Q_t$ for every real $t$.
Write $w_0=x_0+iy_0$ and define
$$
F(t,x,y)
:=
\bigl(
\operatorname{Re}Q_t(x+iy),
\operatorname{Im}Q_t(x+iy)
\bigr).
$$
At $(t_0,x_0,y_0)$, the Jacobian with respect to $(x,y)$ is
$$
D_{(x,y)}F(t_0,x_0,y_0)
=
\begin{pmatrix}
\operatorname{Re}Q_{t_0}'(w_0)
&
-\operatorname{Im}Q_{t_0}'(w_0)
\\[7pt]
\operatorname{Im}Q_{t_0}'(w_0)
&
\operatorname{Re}Q_{t_0}'(w_0)
\end{pmatrix}.
$$
Hence
$$
\det D_{(x,y)}F(t_0,x_0,y_0)
=
\bigl|Q_{t_0}'(w_0)\bigr|^2>0,
$$
because $w_0$ is simple. The real implicit function theorem therefore
gives a unique local $C^1$ branch $w(t)$, defined near $t_0$, such
that
$$
w(t_0)=w_0,
\quad
Q_t(w(t))=0.
$$
Since $\partial Q_t/\partial t=1$, differentiation gives
\begin{equation}\label{eq:wprime}
w'(t)=-\frac{1}{Q_t'(w(t))}.
\end{equation}

Set $\rho(t):=|w(t)|^2$.
Then
$$
\rho'(t)
=
2\operatorname{Re}\bigl(\overline{w(t)}\,w'(t)\bigr).
$$
Since $|w_0|=1$, using~\eqref{eq:wprime} we obtain
$$
\begin{aligned}
\rho'(t_0)
&=
-2\operatorname{Re}
\left(
\frac{\overline{w_0}}{Q_{t_0}'(w_0)}
\right)
\\[7pt]
&=
-2\,
\frac{
\operatorname{Re}\bigl(w_0Q_{t_0}'(w_0)\bigr)
}{
\bigl|w_0Q_{t_0}'(w_0)\bigr|^2
}.
\end{aligned}
$$
The numerator is strictly negative, so $\rho'(t_0)>0$. Since
$\rho(t_0)=1$, there is an $\varepsilon>0$ such that
$\rho(t)>1$ for $t_0<t<t_0+\varepsilon$, whereas
$\rho(t)<1$ for $t_0-\varepsilon<t<t_0$. Every unit-circle
contact is therefore transversal: as $t$ increases through $t_0$,
the corresponding zero crosses from the interior to the exterior of
the unit circle.

\subsubsection*{Initial displacement}
We now pass from the local crossing property to the global conclusion.
Since every zero of $P=P_{a_n}$ lies in
$\overline{\mathbb D}$, reciprocity places every zero of
$Q_{a_n}$ on or outside the unit circle. Let
$\xi_1,\ldots,\xi_m$ be its distinct unit-circle zeros, with $m=0$
if there are none. Each $\xi_j$ is simple and therefore lies on a
unique local $C^1$ branch $\xi_j(t)$ satisfying
$$
\xi_j(a_n)=\xi_j,
\quad
Q_t(\xi_j(t))=0,
$$
and
$$
\left.
\frac{d}{dt}|\xi_j(t)|^2
\right|_{t=a_n}
>0.
$$
Since there are only finitely many such zeros, there exists
$\varepsilon_0>0$ such that
\begin{equation}\label{eq:boundarymovesout}
|\xi_j(t)|>1,
\quad
a_n<t\leq a_n+\varepsilon_0,
\quad
j=1,\ldots,m.
\end{equation}

Choose pairwise disjoint closed discs $U_1,\ldots,U_m$, with $U_j$
centred at $\xi_j$, such that each $U_j$ contains no zero of
$Q_{a_n}$ other than $\xi_j$ and the boundary of $U_j$ contains no
zero of $Q_{a_n}$. By decreasing $\varepsilon_0$ if necessary, we may
assume that
$$
\xi_j(t)\in\operatorname{int} U_j,
\quad
a_n\leq t\leq a_n+\varepsilon_0,
\quad
j=1,\ldots,m.
$$
Let $\eta_1,\ldots,\eta_r$ be the distinct zeros of $Q_{a_n}$
outside the unit circle, and let $\mu_k$ denote the algebraic
multiplicity of $\eta_k$; set $r=0$ if no such zero exists. Unlike
the $\xi_j$, the exterior zeros $\eta_k$ need not be simple.

Choose pairwise disjoint closed discs $V_1,\ldots,V_r$, each centred
at the corresponding zero $\eta_k$, such that the discs are disjoint
from $U_1,\ldots,U_m$, lie entirely in the region $|w|>1$, and have
boundaries containing no zero of $Q_{a_n}$. Figure~\ref{fig:zero-neighbourhoods} illustrates the distinction
between the two families of neighbourhoods.

The points $\xi_1,\ldots,\xi_m,\eta_1,\ldots,\eta_r$ are all the
distinct zeros of $Q_{a_n}$. Since this polynomial has degree $n$,
at least one of the two families of discs is non-empty.

Let
$$
\Gamma
:=
\bigcup_{j=1}^m\partial U_j
\;\cup\;
\bigcup_{k=1}^r\partial V_k.
$$
This is a non-empty compact set, and $Q_{a_n}$ has no zero on
$\Gamma$. Hence
$$
\delta
:=
\min_{w\in\Gamma}|Q_{a_n}(w)|
>0.
$$
Choose $\varepsilon>0$ such that
$\varepsilon\leq\varepsilon_0$ and $\varepsilon<\delta$. If
$a_n<t\leq a_n+\varepsilon$, then, for every $w\in\Gamma$,
$$
|Q_t(w)-Q_{a_n}(w)|
=
t-a_n
<
\delta
\leq
|Q_{a_n}(w)|.
$$
Rouch\'e's theorem therefore applies on each of the contours
$\partial U_j$ and $\partial V_k$, with the same choice of
$\varepsilon$. It shows that $Q_t$ has exactly one zero in
$\operatorname{int}U_j$, namely $\xi_j(t)$, and exactly $\mu_k$
zeros, counted with multiplicity, in $\operatorname{int}V_k$.

By~\eqref{eq:boundarymovesout} and the choice of the discs $V_k$, all
these zeros lie strictly outside the unit circle. Since each $\xi_j$
is simple and each $\eta_k$ has algebraic multiplicity $\mu_k$,
these neighbourhoods account for all
$m+\sum_{k=1}^r\mu_k=n$ zeros of $Q_t$, counted with multiplicity.
Thus every zero of $Q_t$ has modulus greater than one whenever
$a_n<t\leq a_n+\varepsilon$. Put
$t_1:=a_n+\varepsilon$.

\begin{figure}[htbp]
\centering
\begin{tikzpicture}[
  x=1cm,
  y=1cm,
  line cap=round,
  line join=round
]
  \tikzset{
    unitcircle/.style={
      draw=black!78,
      line width=0.82pt
    },
    Udisc/.style={
      draw=black!55,
      fill=black!3,
      line width=0.62pt
    },
    Vdisc/.style={
      draw=black!82,
      fill=black!14,
      line width=0.72pt
    },
    root/.style={
      fill=black!95
    },
    rootring/.style={
      draw=black!90,
      line width=0.45pt
    },
    legend/.style={
      anchor=north west,
      align=left,
      text width=4.45cm,
      font=\footnotesize,
      inner sep=0pt
    }
  }

  \coordinate (c) at (-0.75,0);
  \path[fill=black!1] (c) circle (2);

  \path (c) ++(35:2) coordinate (uone);
  \path (c) ++(155:2) coordinate (utwo);
  \path (c) ++(250:2) coordinate (uthree);

  \draw[Udisc] (uone) circle (0.38);
  \draw[Udisc] (utwo) circle (0.36);
  \draw[Udisc] (uthree) circle (0.34);

  \coordinate (vone) at (2.45,0.85);
  \coordinate (vtwo) at (2.30,-1.08);

  \draw[Vdisc] (vone) circle (0.56);
  \draw[Vdisc] (vtwo) circle (0.50);

  \draw[unitcircle] (c) circle (2);

  \fill[root] (uone) circle (1.55pt);
  \fill[root] (utwo) circle (1.55pt);
  \fill[root] (uthree) circle (1.55pt);

  \fill[root] (vone) circle (1.55pt);
  \draw[rootring] (vone) circle (3.8pt);
  \fill[root] (vtwo) circle (1.55pt);
  \draw[rootring] (vtwo) circle (3.8pt);

  \draw[black!22,line width=0.4pt]
    (-5.45,-2.58) -- (5.30,-2.58);

  \draw[Udisc] (-5.25,-3.08) circle (0.20);
  \draw[unitcircle] (-5.55,-3.08) -- (-4.95,-3.08);
  \fill[root] (-5.25,-3.08) circle (1.35pt);

  \node[legend] at (-4.82,-2.84)
    {$\boldsymbol{U_j}$. Centre $\xi_j\in\partial\mathbb D$, with
     $\operatorname{mult}_{Q_{a_n}}(\xi_j)=1$. For $t>a_n$
     sufficiently close to $a_n$, $Q_t$ has one zero in
     $\operatorname{int}U_j$.};

  \draw[Vdisc] (0.35,-3.08) circle (0.20);
  \fill[root] (0.35,-3.08) circle (1.35pt);
  \draw[rootring] (0.35,-3.08) circle (3.3pt);

  \node[legend] at (0.77,-2.84)
    {$\boldsymbol{V_k}$. Centre $\eta_k$, with $|\eta_k|>1$ and
     $\operatorname{mult}_{Q_{a_n}}(\eta_k)=\mu_k\geq1$. For
     $t>a_n$ sufficiently close to $a_n$, $Q_t$ has $\mu_k$ zeros in
     $\operatorname{int}V_k$, counted with multiplicity.};
\end{tikzpicture}

\caption[Local zero neighbourhoods for $Q_{a_n}$]
{The two local configurations at $t=a_n$. Boundary zeros are simple,
whereas no simplicity assumption is imposed on exterior zeros. Ringed
markers record this distinction but do not encode a particular value
of $\mu_k$. The drawing is not to scale.}
\label{fig:zero-neighbourhoods}
\end{figure}

\subsubsection*{Exclusion of a first return}
Assume, for a contradiction, that $Q_t$ has a zero in
$\overline{\mathbb D}$ for some $t>t_1$, and define
$$
\mathcal B
:=
\left\{
t\geq t_1:
Q_t(w)=0
\text{ for some }w\in\overline{\mathbb D}
\right\}.
$$
This set is non-empty by assumption.

We first note that $\mathcal B$ is closed. Suppose that
$(t_\ell)$ is a sequence in $\mathcal B$ converging to $t$. For
each $\ell$, choose $w_\ell\in\overline{\mathbb D}$ with
$Q_{t_\ell}(w_\ell)=0$. By compactness, after passing to a subsequence,
we may suppose that $w_\ell\to w$ for some
$w\in\overline{\mathbb D}$. Since $t_\ell\to t$, the
continuity of $(s,z)\mapsto Q_s(z)$ gives
$$
Q_t(w)
=
\lim_{\ell\to\infty}Q_{t_\ell}(w_\ell)
=
0.
$$
Moreover, $t_\ell\geq t_1$ for every $\ell$, and hence $t\geq t_1$.
Thus $t\in\mathcal B$, as required. Since $\mathcal B$ is non-empty and bounded below, put
$$
T:=\inf\mathcal B.
$$

For each positive integer $\ell$, one may choose
$s_\ell\in\mathcal B$ with $T\leq s_\ell<T+1/\ell$. Hence
$s_\ell\to T$,
and closedness gives $T\in\mathcal B$; in particular,
$T=\min\mathcal B$. On the other hand, the choice of $t_1$ ensures
that every zero of $Q_{t_1}$ has modulus greater than $1$. Thus
$t_1\notin\mathcal B$, and consequently $T>t_1$.

Because $T\in\mathcal B$, the polynomial $Q_T$ has a zero $w_T$
with $|w_T|\leq1$. We claim that $|w_T|=1$. Were
$|w_T|<1$, we could choose a closed disc $\Delta$, centred at
$w_T$ and contained in $\mathbb D$, such that $Q_T$ has no zero
on $\partial\Delta$. Since
$$
Q_t(w)-Q_T(w)=t-T,
$$
Rouch\'e's theorem shows that, for $t<T$ sufficiently close to
$T$, the polynomials $Q_t$ and $Q_T$ have the same number of
zeros, counted with multiplicity, in $\operatorname{int}\Delta$.
This number is positive because $w_T\in\operatorname{int}\Delta$. We
may also require $t_1<t<T$, which would place $t$ in
$\mathcal B$ and contradict the minimality of $T$. The claim
follows.

By the preceding local analysis, the simple zero $w_T$ lies on a
unique local $C^1$ branch $w(t)$ such that
$$
\begin{aligned}
w(T)&=w_T,
\quad
Q_t(w(t))=0,
\\[7pt]
\left.
\frac{d}{dt}|w(t)|^2
\right|_{t=T}
&>0.
\end{aligned}
$$
Hence, as $t\longrightarrow T$,
$$
|w(t)|^2
=
1+
\left.
\frac{d}{dt}|w(t)|^2
\right|_{t=T}
(t-T)
+
o(|t-T|).
$$
It follows that $|w(t)|<1$ whenever $t<T$ is sufficiently close to
$T$. Since $T>t_1$, such a $t$ may be chosen in the interval
$(t_1,T)$. Once again $t\in\mathcal B$, contrary to the minimality
of $T$. This contradiction shows that $\mathcal B$ is empty. Together with the
local conclusion obtained for $a_n<t\leq t_1$, it follows that every
zero $w$ of $Q_t$ satisfies $|w|>1$ whenever $t>a_n$. The
reciprocal correspondence then places every zero of $P_t$ in
$\mathbb D$. Finally, setting $t=a_n+d$ gives $P_t=P_d$ and
completes the proof.
\end{proof}

\section{Proof of Lemma~\ref{lem:trig}}\label{sec:trig-lemma}

\begin{proof}
Set
$$
b_{n+1}:=0,
\quad
\delta_k:=b_k-b_{k+1},
\quad
k=1,\ldots,n.
$$
By the monotonicity of the sequence $(b_j)$,
$$
\delta_k\geq0,
\quad
k=1,\ldots,n.
$$
Moreover, telescoping gives
\begin{equation}\label{eq:btelescope}
b_j=\sum_{k=j}^n\delta_k,
\quad
j=1,\ldots,n,
\end{equation}
and, in particular,
\begin{equation}\label{eq:b1delta}
b_1=\sum_{k=1}^n\delta_k.
\end{equation}

Using~\eqref{eq:btelescope} and interchanging two finite sums, the
hypothesis~\eqref{eq:imagcondition} becomes
\begin{equation}\label{eq:imagdelta}
\sum_{k=1}^n\delta_k
\sum_{j=1}^k\sin(j\theta)=0.
\end{equation}

Let
$$
\Lambda:=(n-1)b_1+
\sum_{j=1}^n(n-j)b_j\cos(j\theta).
$$
It follows from~\eqref{eq:btelescope} and~\eqref{eq:b1delta} that
\begin{equation}\label{eq:Ldelta}
\Lambda
=
\sum_{k=1}^n\delta_k
\left[
(n-1)+
\sum_{j=1}^k(n-j)\cos(j\theta)
\right].
\end{equation}

Put $x:=\theta/2$. Then $0<x<\pi$, so $\sin x>0$ and
$\cot x$ is well defined. Writing $\theta=2x$ and adding
$(n-1)\cot x$ times equation~\eqref{eq:imagdelta} to the right-hand
side of~\eqref{eq:Ldelta} gives
\begin{equation}\label{eq:LE}
\Lambda=\sum_{k=1}^n\delta_k E_{n,k}(x),
\end{equation}
where
\begin{equation}\label{eq:Enkdef}
\begin{aligned}
E_{n,k}(x)
:={}&
(n-1)+
\sum_{j=1}^k(n-j)\cos(2jx)
\\[7pt]
&+
(n-1)\cot x
\sum_{j=1}^k\sin(2jx).
\end{aligned}
\end{equation}
Since $\delta_k\geq0$, it is enough to prove that
$$
E_{n,k}(x)\geq0,
\quad
k=1,\ldots,n.
$$

For $r=0,1,\ldots$, define
$$
U_r(x):=\frac{\sin((r+1)x)}{\sin x}.
$$
Equivalently, $U_r(x)$ is obtained by evaluating the degree-$r$
Chebyshev polynomial of the second kind at $\cos x$.

We first observe that
$$
\cos(2jx)+\cot x\,\sin(2jx)
=
\frac{\sin((2j+1)x)}{\sin x}.
$$
Together with the finite trigonometric sum
$$
\sum_{j=0}^k\sin((2j+1)x)
=
\frac{\sin^2((k+1)x)}{\sin x},
$$
this gives
\begin{equation}\label{eq:Ukidentity}
1+
\sum_{j=1}^k
\left[
\cos(2jx)+\cot x\,\sin(2jx)
\right]
=
U_k(x)^2.
\end{equation}

Writing $n-j=(n-1)-(j-1)$ in~\eqref{eq:Enkdef} and
using~\eqref{eq:Ukidentity}, we obtain
$$
E_{n,k}(x)
=
(n-1)U_k(x)^2
-
\sum_{j=1}^k(j-1)\cos(2jx).
$$
Set
\begin{equation}\label{eq:Dkdef}
D_k(x)
:=
(k-1)U_k(x)^2
-
\sum_{j=1}^k(j-1)\cos(2jx).
\end{equation}
Then
\begin{equation}\label{eq:EnkDk}
E_{n,k}(x)
=
(n-k)U_k(x)^2+D_k(x).
\end{equation}
It remains to show that
$$
D_k(x)\geq0,
\quad
k=1,\ldots,n.
$$

For $r=0,1,\ldots$, we have the finite Fourier expansion
\begin{equation}\label{eq:finitefourier}
U_r(x)^2
=
(r+1)+
2\sum_{j=1}^r(r+1-j)\cos(2jx).
\end{equation}
Indeed,
$$
U_r(x)
=
\sum_{\ell=0}^r e^{i(r-2\ell)x}.
$$
Since this sum is real, we have
$$
\begin{aligned}
U_r(x)^2
&=
\left|
\sum_{\ell=0}^r e^{i(r-2\ell)x}
\right|^2
\\[7pt]
&=
\sum_{\ell,m=0}^r e^{2i(m-\ell)x}
\\[7pt]
&=
(r+1)+
2\sum_{j=1}^r(r+1-j)\cos(2jx),
\end{aligned}
$$
which proves~\eqref{eq:finitefourier}.

Applying~\eqref{eq:finitefourier} with $r=k$ and $r=k-1$, a direct
calculation gives
\begin{equation}\label{eq:DkFourier}
2D_k(x)
=
kU_{k-1}(x)^2
+
(k-1)U_k(x)^2
-1.
\end{equation}

For $k=1$, definition~\eqref{eq:Dkdef} immediately gives
$$
D_1(x)=0.
$$
For $k=2$, the elementary formulae for $U_1$ and $U_2$, together
with~\eqref{eq:DkFourier}, give
$$
\begin{aligned}
U_1(x)&=2\cos x,
\quad
U_2(x)=4\cos^2x-1,
\\[7pt]
2D_2(x)&=2U_1(x)^2+U_2(x)^2-1
=16\cos^4x,
\\[7pt]
D_2(x)&=8\cos^4x\geq0.
\end{aligned}
$$

Suppose now that $k\geq3$. We use the elementary identity
\begin{equation}\label{eq:Ukquadratic}
U_k(x)^2+U_{k-1}(x)^2
-
2\cos x\,U_k(x)U_{k-1}(x)
=
1,
\end{equation}
which follows directly from the sine addition formula.
Combining~\eqref{eq:Ukquadratic} with~\eqref{eq:DkFourier}, we obtain
$$
\begin{aligned}
2D_k(x)
&=
(k-2)U_k(x)^2
+
(k-1)U_{k-1}(x)^2+
2\cos x\,U_k(x)U_{k-1}(x)
\\[7pt]
&\geq
(k-3)U_k(x)^2
+
(k-2)U_{k-1}(x)^2
\\[7pt]
&
\geq0.
\end{aligned}
$$
Here the first inequality uses $|\cos x|\leq1$ and
$2|uv|\leq u^2+v^2$.
Thus $D_k(x)\geq0$ for every $1\leq k\leq n$. Since
$n-k\geq0$, equation~\eqref{eq:EnkDk} then gives
$E_{n,k}(x)\geq0$ throughout the same range.
Finally,~\eqref{eq:LE} and $\delta_k\geq0$ give
$$
\Lambda
=
\sum_{k=1}^n\delta_k E_{n,k}(x)
\geq0.
$$
This proves~\eqref{eq:lemmaineq} and hence the lemma.
\end{proof}

\section{Application to Brown methods}

For each positive integer $L$, define
$$
\begin{aligned}
K_L^\ast
&:=
\min_{1\leq j\leq L}
\left\{
\frac{(j+1)^{L+1}}{2j^L}+j
\right\},
\\[7pt]
K_L
&:=
\min_{1\leq j\leq L}
\left\{
\frac{(j+1)^{L+1}}{j^L}+j
\right\}.
\end{aligned}
$$

\begin{corollary}[Jeltsch's conjecture for a subclass of Brown methods]
\label{cor:brown}
Let $K$ and $L$ be positive integers such that
$$
L\geq10,
\quad
K\leq K_L.
$$
Then every zero-stable Brown $(K,L)$ method is stiffly stable.
\end{corollary}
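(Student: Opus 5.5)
The plan is to reduce the corollary to Theorem~\ref{thm:main} through Meneguette's reduction~\cite[Theorem~2.13, Remark~2.16]{M87}. For a Brown $(K,L)$ method, stiff stability follows once we have $A_0$-stability together with zero stability and the behaviour at infinity. Meneguette shows that $A_0$-stability holds as soon as, for every $d>0$, the perturbed polynomial $P(z)+dz^n$ has all its zeros in $\mathbb D$. Here $P$ is the suitably normalised characteristic polynomial of the method, $n=K$ or a fixed function of $K$ as in~\cite{M87}, and $d$ is a monotone function of the negative real step parameter. So the task is to check that, in the range $L\geq10$, $K\leq K_L$, the polynomial $P$ meets every hypothesis of Theorem~\ref{thm:main}.

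I would verify the hypotheses in the following order.

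\emph{Positivity and monotonicity.} I expect Meneguette's coefficient result~\cite[Theorem~2.9]{M87} to give
$$
0<a_0\leq a_1\leq\cdots\leq a_{n-1}, \quad a_n>0.
$$
I would invoke it for $K\leq K_L$, possibly with the sharper bound $K_L^\ast$ governing one of the comparisons.

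\emph{Zeros in the closed disc.} The zeros of $P$ lie in $\overline{\mathbb D}$ directly by the assumed zero stability, after identifying $P$ with the first characteristic polynomial up to the normalisation used in~\cite{M87}.

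\emph{The leading inequality.} This is the strict inequality $na_n>(n-1)a_{n-1}$. I would derive it from the strong-stability result~\cite[Theorem~2.12]{M87}, or from the explicit formulas for $a_{n-1}$ and $a_n$ in terms of $K$ and $L$.

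Once these three facts are in hand, Theorem~\ref{thm:main} places every zero of $P+dz^n$ in $\mathbb D$ for every $d>0$. Meneguette's reduction then gives $A_0$-stability, and combining this with zero stability yields stiff stability.

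The main obstacle is the inequality $na_n>(n-1)a_{n-1}$ over the whole range $K\leq K_L$. The definition of $K_L$ as a minimum over $j$ of $(j+1)^{L+1}/j^L+j$ suggests the following. The coefficients $a_{n-1}$ and $a_n$ are given by sums involving powers $j^L$ coming from the Hermite--Birkhoff interpolation underlying Brown's methods. The condition $K\leq K_L$ is then exactly what makes a certain termwise ratio exceed $(n-1)/n$. I would first try to write $na_n-(n-1)a_{n-1}$ as a sum over $j$ and show that each term is positive when $K\leq (j+1)^{L+1}/j^L+j$. The hypothesis $L\geq10$ is presumably needed to control the remaining terms, or to let Meneguette's monotonicity theorem (stated with $K_L^\ast$) be upgraded to the larger bound $K_L$. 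If the termwise approach fails, I would use the strong-stability theorem to get $a_n$ close to $a_{n-1}$ and compare with the estimate $a_{n-1}-a_n<a_n/(n-1)$. That estimate is precisely the reformulation of the hypothesis recorded in the introduction.
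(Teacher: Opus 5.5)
Your outline has the paper's shape: Meneguette's root-locus reduction, zero stability for the closed-disc hypothesis, \cite[Theorem~2.9]{M87} for monotonicity, then Theorem~\ref{thm:main} with $n=K$, $d=\gamma$. You should still state the normalisation explicitly. Since the $\alpha_j$ alternate in sign, the theorem is applied to $(-1)^Kp_\gamma(-z)$, whose coefficients $c_j=(-1)^{K+j}\alpha_j$ are positive by Meneguette's sign formula, and this reflection preserves moduli of zeros.

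The closing step, however, is not justified. The equivalence the paper uses, \cite[Theorem~2.16]{M87}, is stiff stability $\Leftrightarrow$ $A_0$-stability \emph{and strong stability}. Strong stability is a genuinely stronger requirement than zero stability, so $A_0$-stability plus zero stability is not enough. The missing input is \cite[Theorem~2.12]{M87}, which gives strong stability for $K\leq K_L$. You do cite that theorem, but for a purpose it cannot serve.

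That purpose is the inequality $Kc_K>(K-1)c_{K-1}$, which your proposal never actually establishes. Your termwise decomposition is conjectural, since no coefficient formula is written down. Strong stability cannot supply the inequality either, because it only constrains where the zeros lie. Root location, positivity and monotonicity together do not control $na_n-(n-1)a_{n-1}$. For instance, $(1+z)(z^2+0.9z+0.9)=0.9+1.8z+1.9z^2+z^3$ has non-decreasing lower coefficients, one simple zero on the unit circle and the other two strictly inside, yet $3\cdot1<2\cdot1.9$.

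The paper takes this inequality from Meneguette's explicit coefficient estimates \cite[pp.~106--107]{M87}, and that is exactly where the hypothesis $L\geq10$ is used. The hypothesis is not there to upgrade monotonicity from $K_L^\ast$ to $K_L$. Theorem~2.9 already gives monotonicity for every $K\leq K_L$, and $K_L^\ast$ is only the smaller range reached by the coefficient-ratio argument of \cite{BMCM12}.
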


\begin{proof}
Let $\alpha_0,\ldots,\alpha_K\in\mathbb R$ denote the characteristic
coefficients of the method in Meneguette's notation. In his root-locus
analysis, it is sufficient for $A_0$-stability to prove that, for
every $\gamma>0$, all the zeros of
$$
p_\gamma(z)
=
(\alpha_K+\gamma)z^K
+
\alpha_{K-1}z^{K-1}
+\cdots+
\alpha_0
$$
lie in $\mathbb D$~\cite[Theorem~2.13, Remark~2.16 and
pp.~81--82, 105--107]{M87}.

Set
$$
\widetilde p_\gamma(z)
:=
(-1)^Kp_\gamma(-z).
$$
Under this transformation each zero is replaced by its negative, so
its modulus is unchanged. For $0\leq j\leq K-1$, set
$$
c_j:=(-1)^{K+j}\alpha_j,
\quad
c_K:=\alpha_K.
$$
In Meneguette's positive-coefficient formulation, the sign formula for
the characteristic coefficients gives $c_j>0$ for
$0\leq j\leq K$~\cite[(2.16), p.~65; p.~105]{M87}.
Moreover,
$$
\widetilde p_\gamma(z)
=
c_0+c_1z+\cdots+c_{K-1}z^{K-1}
+(c_K+\gamma)z^K.
$$
Meneguette's coefficient estimates show that the sequence
$c_0,\ldots,c_{K-1}$ is non-decreasing whenever
$K\leq K_L$~\cite[Theorem~2.9, pp.~66--67]{M87}. For $L\geq10$,
they also give
$$
Kc_K>(K-1)c_{K-1}
$$
throughout the same range of $K$~\cite[pp.~106--107]{M87}.

If the Brown method is zero-stable, then $p_0$ satisfies the root
condition~\cite[Definition~2.4 and pp.~46--47]{M87}. Hence all the zeros of
$\widetilde p_0$ lie in $\overline{\mathbb D}$.
Theorem~\ref{thm:main}, applied with $n=K$,
$P=\widetilde p_0$, and $d=\gamma$, places every zero of
$\widetilde p_\gamma$ in $\mathbb D$. Since the transformation
preserves moduli, the same is true of $p_\gamma$. Thus the method is
$A_0$-stable.

Meneguette also proves that the method is strongly stable for
$K\leq K_L$~\cite[Theorem~2.12, pp.~70--72]{M87}. Finally, stiff
stability is equivalent to the conjunction of $A_0$-stability and
strong stability~\cite[Theorem~2.16, pp.~96--97]{M87}. The method is
therefore stiffly stable.
\end{proof}

For comparison, the coefficient-ratio argument of~\cite{BMCM12}
establishes the corresponding implication when $K\leq K_L^\ast$.
Thus Corollary~\ref{cor:brown} extends the previously verified bound
from $K_L^\ast$ to $K_L$ when $L\geq10$.

\section*{Acknowledgements}
The author is deeply grateful to V.~Botta for drawing his attention to
this problem, for generously sharing the relevant sources---among them
M.~Meneguette's doctoral thesis---and for many stimulating
discussions. He also thanks M.~Meneguette for agreeing to read the
manuscript and comment on the proof. The author acknowledges financial support from the Centre for
Mathematics of the University of Coimbra (CMUC), funded by the
Portuguese Foundation for Science and Technology (FCT), under the
projects UID/00324/2025
(\url{https://doi.org/10.54499/UID/00324/2025}) and
UID/PRR/00324/2025. The author also acknowledges financial support
from the FCT under the grant
\url{https://doi.org/10.54499/2022.00143.CEECIND/CP1714/CT0002}.

\end{document}